\def\E{\mathbb{E}}
\def\P{\mathbb{P}}
\newtheorem{theorem}{Theorem}
\newtheorem{lemma}[theorem]{Lemma}
\newtheorem{corollary}[theorem]{Corollary}
\theoremstyle{plain}
\theoremstyle{remark}
\theoremstyle{definition}
\begin{document}
\title{Cover times with stochastic resetting}
\author{Samantha Linn}
\thanks{SL (corresponding author): Department of Mathematics, University of Utah, Salt Lake City, UT 84112 USA. (\texttt{slinn.math@gmail.com}).}
\author{Sean D. Lawley}
\thanks{SDL: Department of Mathematics, University of Utah, Salt Lake City, UT 84112 USA. (\texttt{lawley@math.utah.edu}).}
\maketitle

\begin{abstract}
Cover times quantify the speed of exhaustive search. In this work, we approximate the moments of cover times of a wide range of stochastic search processes in $d$-dimensional continuous space and on an arbitrary discrete network under frequent stochastic resetting. These approximations apply to a large class of resetting time distributions and search processes including diffusion, run-and-tumble particles, and Markov jump processes. We illustrate these results in several examples; in the case of diffusive search we show that the errors of our approximations vanish exponentially fast. Finally we derive a criterion for when endowing a discrete state search process with minimal stochastic resetting reduces the mean cover time.
\end{abstract}
\vspace{.5cm}
\textbf{Cover times measure the time taken by a stochastic searcher to find multiple targets. Cover times thus naturally extend first passage times (FPTs), which describe the time to find a single target. Recent work has shown that stochastic resetting, wherein a searcher randomly resets in space, can reduce mean FPTs. In this work, we study cover times with stochastic resetting. In particular, we approximate statistics of cover times under frequent stochastic resetting. Using recent results on FPTs, we find that such cover times resemble FPTs to the most distant targets. Quantitatively, these cover times are approximately exponentially distributed with a rate depending on the resetting rate and the distant target positions. We illustrate our results through numerous examples in $d$-dimensional continuous space and on a discrete network, which reveal that stochastic resetting can reduce mean cover times.}

\section{Introduction}
An exhaustive search is one in which an entire state space is explored by a searcher. Such search processes are ubiquitous in technology and the natural world, ranging from computer search algorithms \cite{Vergassola2007} to animal foraging \cite{VISWANATHAN2008133, benichou2011rev} or pathogen hunting \cite{heuze2013}. In some cases the state space of interest, or target region, may be a proper subset of the searcher domain but it may also be the entire domain. Whatever the target region, we call the time taken to complete such a search its \emph{cover time}.

To be precise, denote the position of a stochastic process in a domain $M\subseteq\mathbb{R}^d$ by $X = \{X(t)\}_{t\geq 0}$. If this stochastic `searcher' has detection radius $R>0$, then the region it explores by time $t> 0$ is
\begin{align*}
    S_0(t) := \cup_{s=0}^t B(X(s),R) \subseteq M
\end{align*}
where $B(X,R)$ is the ball of radius $R>0$ centered at $X\in M$. Hence, the cover time of a given target region $U_T\subseteq M$ is
\begin{align*}
    T_0 := \inf\{ t\geq 0 : U_T \subseteq S_0(t) \}.
\end{align*}

The bulk of previous work on cover times concerns stochastic search processes on discrete state spaces \cite{Aldous1983, Aldous1989,Kahn1989,Belius2013,CT_chupeau,Maier2017,Weng2017_chaos, Maziya2020,Campos2021,Dong2023, Martinez2025}. A seminal result on this topic is that, in the limit of a large target region, cover times of random walks follow a Gumbel distribution \cite{Belius2013}. There has also been notable progress characterizing cover times on continuous state spaces by making use of various asymptotic limits. For instance, it has been shown that cover times of Brownian motion diverge as the detection radius goes to zero \cite{Dembo2003}, 
\begin{align*}
    T_0 \sim t_d\quad\text{almost surely as }R\to 0,
\end{align*}
where $t_d$ depends on the searcher diffusivity, $D>0$, the domain volume, $|M|$, and the spatial dimension, $d>0$,
\begin{align*}
    t_d = 
    \begin{cases}
        \frac{1}{\pi}\frac{|M|}{D} (\ln(1/R))^2 \quad &\text{if }d=2,\\
        \frac{d\Gamma(d/2)R^{2-d}}{2(d-2)\pi^{d/2}} \frac{|M|}{D} \ln(1/R) \quad &\text{if }d\geq 3.
    \end{cases}
\end{align*}
Throughout, $f\sim g$ denotes $f/g\to 1$. 

In more recent work, others have considered cover times of multiple simultaneous searchers of fixed detection radius in the many searcher limit. In continuous space, the $m$th moment of the cover time of $U_T\subseteq M$ for $N\gg 1$ Brownian searchers, $T(N)\geq 0$, satisfies
\begin{align*}
    \E((T(N))^m) \sim \Big(\frac{L^2}{4D\ln N}\Big)^m\quad\text{as }N\to\infty,
\end{align*}
where $D>0$ denotes searcher diffusivity and $L>0$ denotes the shortest distance the searcher must travel to reach the furthest part of the target region \cite{CT_diff}. Moreover, in discrete space, these moments are given by
\begin{align*}
    \E((T(N))^m) \sim \frac{\kappa_m}{N^{m/h^*}} \quad\text{as }N\to\infty,
\end{align*}
where $h^*\geq 1$ is the fewest steps the searcher must take to reach the furthest part of the target region and $\kappa_m$ depends on the jump rates along this path \cite{CT_N_discrete}. An important observation in the $N\gg 1$ searcher case is that the cover times can be closely approximated with merely some information about the geodesic paths from the initial position of the searcher to the furthest part(s) of the target region. The same can be said of the work herein.

In this paper we consider the cover time of a single stochastic searcher that undergoes stochastic resetting to its initial position. In continuous space the searcher has a fixed detection radius and in discrete space the searcher detects only the node it occupies. To our knowledge, the only other investigation of cover times under stochastic resetting is the work of Colombani et al \cite{Colombani_2023} who studied how resetting can be used to reduce mean cover times (MCTs) of a self-avoiding random walker on a discrete network.

Not only is stochastic resetting a useful formalism to model physical phenomena \cite{Brockwell_Gani_Resnick_1982,Visco2010,Roldan_16}, it can also expedite certain search processes \cite{optMFPT,Stanislavsky2021,calvert2021,debruyne2023}. For instance, consider a Brownian searcher with diffusivity $D>0$ on the half line. It is well-known that without stochastic resetting the mean first passage time (MFPT) to the origin is infinite.
\begin{figure}[h!]
  \centering
\includegraphics[width=8cm]{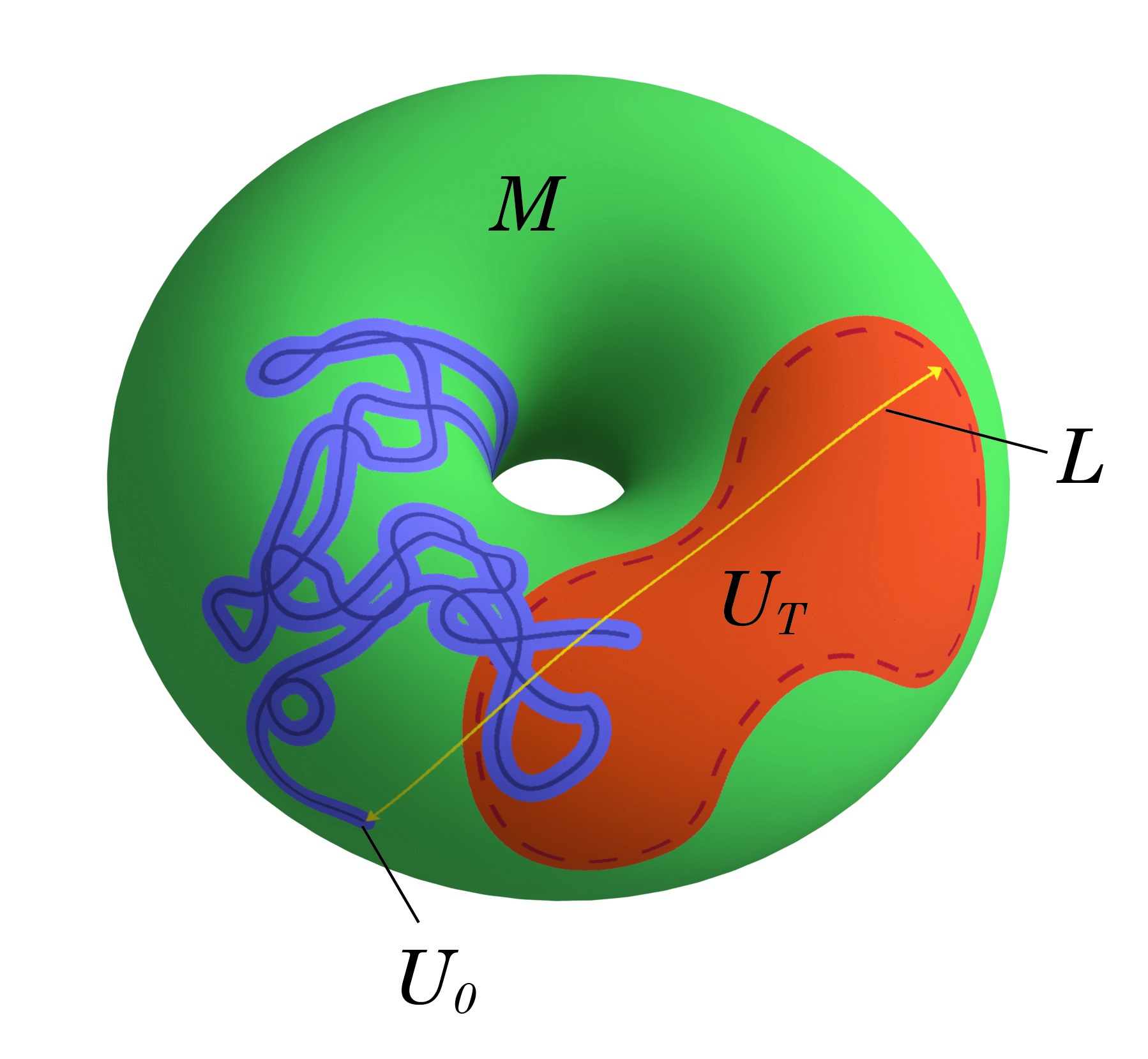}
  \caption{Representative illustration of a spatial domain $M$ (pictured, a 2D torus) with a stochastic searcher that starts at $U_0$. Dark purple indicates the searcher path and light purple indicates the corresponding region detected. Orange denotes the target region ($U_T$) and yellow denotes the shortest path (length $L$) that the searcher must travel to cover the farthest part of the target.}
  \label{fig0}
\end{figure}
However, if this searcher instantaneously resets to its initial position according to an exponential random variable with rate $r>0$, then the MFPT to the origin is finite. That is, denoting its position at time $t> 0$ by $X=\{X(t)\}_{t\geq 0}$ and supposing $X(0) = x_0 > 0$, we have that
\begin{align*}
    \mathbb{E}[\text{inf} \{ t\geq 0 : X(t) \leq 0 \}] = \frac{1}{r}(e^{x_0\sqrt{r/D}} - 1) < \infty,
\end{align*}
which attains a minimum for a strictly positive resetting rate \cite{Evans2020}.

Similar to MFPTs, we show that stochastic resetting can be used to reduce MCTs, though the resetting rates that achieve these reductions may differ between the two. Moreover, we show that the frequent resetting asymptotics of the moments of cover times in discrete and continuous space depend only on the geodesic path(s) from the initial (resetting) position of the searcher to the furthest part(s) of the target region. Our results hold for any fixed detection radius. While these results are exact only in the frequent resetting limit, we compute exactly the MCT with any resetting rate $r>0$ in the cases of a one-dimensional Brownian searcher and run-and-tumble particle.

These results are made precise in the rest of the paper, which is organized as follows. In section \ref{freq}, we consider the $d$-dimensional continuous space cover time problem with resetting. In section \ref{net}, we consider the cover time problem with resetting on an arbitrary discrete network. Section \ref{examples} illustrates these general results in several examples. We conclude by discussing the scope of the results and proposing directions of future work.

\section{Cover times for a frequently resetting searcher in $\mathbb{R}^d$} \label{freq}

\subsection{Definitions and main result} \label{def}
Now consider a stochastic searcher in the domain $M \subseteq \mathbb{R}^d$ equipped with a metric $L:M\times M \to [0,\infty)$ and with fixed detection radius $R>0$ which randomly resets to its initial position. The searcher may be Brownian, though other stochastic processes also satisfy the minimal assumptions imposed on the search process (see section \ref{examples}). Denoting the searcher position at time $t\geq0$ by $X_r(t)$, the region detected by the searcher at time $t\geq0$ is the closed ball of radius $R>0$ centered at $X_r(t)$,
\begin{align*}
    B(X_r(t),R) := \{ y\in M : L(X_r(t),y) \leq R\}.
\end{align*}
The subscript $r$ of $X_r$ denotes the resetting rate, which we formalize below. Denote by $U_0$ the region the searcher initially detects,
\begin{align*} 
    U_0 := B(X_r(0),R).
\end{align*}

We consider resetting processes, $\sigma>0$, that belong to the following family of random variables: Let $Y>0$ be a strictly positive random variable with unit mean,
\begin{align*}
    \E[Y] = 1.
\end{align*}
Moreover, suppose $Y>0$ has a finite moment generating function in a neighborhood of the origin,
\begin{align*}
    \E[e^{zY}] < \infty\quad\text{for all }z\in[-\delta,\delta]
\end{align*}
for some $\delta>0$. Define 
\begin{align} \label{sig}
    \sigma := Y/r
\end{align}
where we call $r>0$ the resetting rate. If $Y$ is a unit rate exponential random variable, then $\sigma$ is exponentially distributed with rate $r$.

The region covered by the resetting searcher up through time $t\geq 0$ is
\begin{align*}
    S_r(t) := \cup_{s=0}^t B(X_r(s),R) \subseteq M,
\end{align*}
and the cover time of the nonempty compact target region $U_T\subseteq M$ is
\begin{align*}
    T_r := \inf\{ t\geq 0 : U_T \subseteq S_r(t) \}.
\end{align*}
To exclude trivial cases, we assume $U_T\not\subseteq U_0$. See Figure \ref{fig0} for an illustration.

Denote the first hitting time of the resetting searcher to a set $\Omega\subseteq M$ with initial position $X_r(0)=x_0$ and resetting position $x_r$ by
\begin{align} \label{taur}
    \tau_r(\Omega|x_0,x_r) := \inf\{ t\geq 0 : X_r(t) \in\Omega\}.
\end{align}
If $x_0=x_r$, we abbreviate \eqref{taur} by $\tau_r(\Omega)$.

Assume that for any union of balls with strictly positive radii that does not contain $U_0$, the search time without resetting $\tau_0(\Omega)$ is strictly positive. If $\Omega = B(x,R)$, then $\tau_r(\Omega)$ is equivalent to
\begin{align*}
    \tau_r(B(x,R)) = \inf\{ t\geq 0 : X_r(t) \in B(x,R) \} = \inf\{ t\geq 0 : x\in S_r(t)\}.
\end{align*}
We remark that throughout this work $T$ denotes a cover time and $\tau$ denotes a hitting time.

Denote by $p_{\Omega}$ the probability of a stochastic searcher in $M\subseteq \mathbb{R}^d$ with initial position $X_0(0)$ hitting a union of balls with strictly positive radii $\Omega\subseteq M$ before time $\sigma>0$,
\begin{align} \label{pO}
    p_{\Omega} := \mathbb{P}(\tau_0(\Omega) < \sigma).
\end{align}
To exclude trivial cases, we assume $p_{\Omega}>0$ for all $r>0$ and $\Omega\subset M$.

A key assumption of this section is that for any two unions of balls with strictly positive radii $\Omega_0,\Omega_1\subset M$, we have the following,
\begin{align} \label{porder}
    L(\Omega_0,X_r(0)) > L(\Omega_1,X_r(0)) \quad\text{implies}\quad \lim_{r\to\infty} p_{\Omega_0}/p_{\Omega_1} = 0
\end{align}
where the distance between two sets $\Phi,\Psi\subset M$ is given by
\begin{align*}
    L(\Phi,\Psi) := \inf_{x\in\Phi,y\in\Psi} L(x,y).
\end{align*}
In words, \eqref{porder} means that a search is much more likely to hit a close target than a far target before a fast resetting time. This assumption is satisfied, for instance, by diffusive search with exponentially distributed resetting in which case $p_{\Omega}$ satisfies
\begin{align} \label{lnpO}
    \ln p_{\Omega} \sim -\sqrt{L(\Omega,X_r(0))^2r/D}\quad\text{as }r\to\infty
\end{align}
where $D>0$ is the diffusivity and $L$ is the standard Euclidean distance (see Theorem 2 in \cite{FPTuFSR}). See subsections \ref{ball} and \ref{torus} for specific examples of this behavior.

Finally, we assume that there exists a nonempty set $U_T^{\text{far}}\subseteq U_T$ such that if $x^*\in U_T^{\text{far}}$ and $y\in U_T$ then
\begin{align*}
    L(y,X_r(0)) \leq L(x^*,X_r(0)).
\end{align*}
In words, $U_T^{\text{far}}$ is the set of target points which are farthest from the initial searcher position.

Now we state the main result of this section:
\begin{theorem} \label{thm1}
    Under the assumptions subsection \ref{def},
    \begin{align*}
    \liminf_{r\to\infty} \mathbb{E}[(rp_{B(y,R)}T_r)^m] \geq m! \quad\text{as }r\to\infty
    \end{align*}
    for any $y\in U_T$ and integer $m\geq 1$. Further, for any $x^*\in U_T^{\text{far}}$,
    \begin{align*}
    \limsup_{r\to\infty} \mathbb{E}[(rp_{B(x^*,(1-\varepsilon)R)}T_r)^m] = 0 \quad\text{for all }\varepsilon \in(0,1).
    \end{align*}
\end{theorem}
Roughly speaking, Theorem \ref{thm1} gives the following bounds on the moments of the cover time in the frequent resetting regime,
\begin{align} \label{approx}
    \frac{m!}{(rp_{B(x^*,R)})^m} \lesssim \mathbb{E}[(T_r)^m] \lesssim \frac{m!}{(rp_{B(x^*,(1-\varepsilon)R)})^m}.
\end{align}
Hence the asymptotic behavior of $T_r$ is determined by the success probability associated with the furthest part of the target. The approximate behavior in \eqref{approx} is clear in the case of a one-dimensional Brownian motion as demonstrated by \eqref{etrrp} in section \ref{exact}. We note that Theorem \ref{thm1} still holds if the success probabilities are replaced by asymptotically equivalent quantities.

Finally observe that through \eqref{approx} we can approximate the `optimal' resetting rate that minimizes the MCT. Namely, by finding the zero of the $r$-derivative of the first relation in \eqref{approx} with $m=1$, we attain the following general approximation,
\begin{align} \label{ropt}
    \frac{1}{r_{\textup{opt}}} \approx - \frac{\textup{d}}{\textup{d}r_{\textup{opt}}} \ln p_{B(x^*,R)}(r_{\textup{opt}}).
\end{align}
In section \ref{examples} we compute the approximate optimum in various examples and compare them to exact values.

\subsection{Proof}
The proof of Theorem \ref{thm1} relies on the following lemma, which was proven in \cite{FPTuFSR} (see Theorem 1 in \cite{FPTuFSR}).
\begin{lemma} \label{FPT}
    Let $p_{\Omega}$ be as in \eqref{pO} and let $\Omega\subset M$ be any union of balls such that $X_r(0)\not\in \Omega$. Then for integers $m\geq 1$, we have
    \begin{align*}
        \E[(\tau_r(\Omega))^m] \sim \frac{m!}{(rp_{\Omega})^m} \quad\text{as }r\to\infty.
    \end{align*}
\end{lemma}

\begin{proof}[Proof of Theorem \ref{thm1}]
We start with the lower bound: Let $y\in U_T$. The survival probability of $T_r$ is that of the union of hitting times to each point in the target region, which itself has a natural lower bound,
\begin{align} \label{lb}
    \mathbb{P}(T_r > t) = \mathbb{P}(\cup_{x\in U_T} \{ \tau_r(B(x,R)) > t\}) \geq \mathbb{P}(\tau_r(B(y,R)) > t).
\end{align}
Hence, substituting $t$ for $z^{1/m}$ with $m\in \mathbb{N}\backslash \{0\}$ in \eqref{lb} and integrating over $z\geq 0$ bounds the moments of $T_r$ from below,
\begin{align*}
    \mathbb{E}[(T_r)^m] &= \int_0^{\infty} \mathbb{P}(T_r > z^{1/m})\,\textup{d}z\\ &\geq \int_0^{\infty} \mathbb{P}(\tau_r(B(y,R)) > z^{1/m})\,\textup{d}z = \mathbb{E}[(\tau_r(B(y,R)))^m].
\end{align*}
By Lemma \ref{FPT} we find
\begin{align*}
    \liminf_{r\to\infty} \mathbb{E}[(rp_{B(y,R)}T_r)^m] \geq \lim_{r\to\infty} \mathbb{E}[(rp_{B(y,R)}\tau_r(B(y,R)))^m] = m!.
\end{align*}
Hence
\begin{align*}
    \liminf_{r\to\infty} \mathbb{E}[(rp_{B(y,R)}T_r)^m] \geq m!.
\end{align*}

Determining the limit supremum is more involved: Let $\varepsilon \in(0,1)$ and $x^*\in U_T^{\textup{far}}$, and cover $U_T$ by a finite set of $\varepsilon R$-radius balls with centers $\{x_1,\dots,x_k\}\in U_T$,
\begin{align*}
    \cup_{i=1}^k B(x_i,\varepsilon R) \supseteq U_T.
\end{align*}
With this cover, one can show \cite{CT_diff} that
for any $i\in\{1,\dots,k\}$,
\begin{align*}
    B(x_i,\varepsilon R) \subset B(y_i,R)\quad\text{if}\quad y_i\in B(x_i,(1-\varepsilon)R).
\end{align*}
Hence
\begin{align} \label{PTr}
\begin{split}
    \mathbb{P}(T_r > t) &= \mathbb{P}(\cup_{x\in U_T} \{ \tau_r(B(x,R)) > t\})\\
    &\leq \mathbb{P}(\cup_{i=1}^k \{ \tau_r(B(x_i,(1-\varepsilon)R)) > t\}).
\end{split}
\end{align}
Using the inclusion-exclusion principle \cite{durrett2019} we rewrite the rightmost expression in \eqref{PTr},
\begin{align} \label{PTr_cont}
\begin{split}
    \mathbb{P}(&\cup_{i=1}^k \{ \tau_r(B(x_i,(1-\varepsilon)R)) > t \})\\
    &= \sum_{j=1}^k \Big((-1)^{j-1} \sum_{\substack{I\subset \{ 1,\dots,k \} \\ |I|=j }} \mathbb{P}(\cap_{i\in I} \{ \tau_r (B(x_i,(1-\varepsilon)R)) > t \})\Big)\\
    &= \sum_{j=1}^k \Big((-1)^{j-1} \sum_{\substack{I\subset \{ 1,\dots,k \} \\ |I|=j }} \mathbb{P}( \min_{i\in I} \{ \tau_r (B(x_i,(1-\varepsilon)R)) \} > t )\Big).
\end{split}
\end{align}
Hence, substituting $t$ for $z^{1/m}$ with $m\in \mathbb{N}\backslash \{0\}$ in \eqref{PTr} and \eqref{PTr_cont}, and integrating over $z\geq 0$ bounds the moments of $T_r$ from above,
\begin{align*}
    \mathbb{E}[(T_r)^m] &\leq \sum_{j=1}^k \Big((-1)^{j-1} \sum_{\substack{I\subset \{ 1,\dots,k \} \\ |I|=j }} \int_0^{\infty} \mathbb{P}( \min_{i\in I} \{ \tau_r (B(x_i,(1-\varepsilon)R)) \} > z^{1/m} )\,\textup{d}z\Big) \\
    &= \sum_{j=1}^k \Big((-1)^{j-1} \sum_{\substack{I\subset \{ 1,\dots,k \} \\ |I|=j }} \mathbb{E}[( \min_{i\in I} \{ \tau_r (B(x_i,(1-\varepsilon)R)) \} )^m]\Big)\\
    &= \sum_{j=1}^k \Big((-1)^{j-1} \sum_{\substack{I\subset \{ 1,\dots,k \} \\ |I|=j }} \mathbb{E}[(\tau_r(\cup_{i\in I} B(x_i,(1-\varepsilon)R)))^m] \Big).
\end{align*}
Multiplying through by $(rp_{B(x^*,(1-2\varepsilon)R)})^m$ yields
\begin{align} \label{main}
\begin{split}
    \mathbb{E}[(r&p_{B(x^*,(1-2\varepsilon)R)}T_r)^m]\\ &\leq \sum_{j=1}^k \Big((-1)^{j-1} \sum_{\substack{I\subset \{ 1,\dots,k \} \\ |I|=j }} \mathbb{E}[(rp_{B(x^*,(1-2\varepsilon)R)}\tau_r(\cup_{i\in I} B(x_i,(1-\varepsilon)R)))^m] \Big)\\
    &= \sum_{j=1}^k \Big((-1)^{j-1} \sum_{\substack{I\subset \{ 1,\dots,k \} \\ |I|=j }} \Big(\frac{p_{B(x^*,(1-2\varepsilon)R)}}{p_{\cup_{i\in I} B(x_i,(1-\varepsilon)R)}}\Big)^m\\ &\qquad \qquad \qquad \qquad \times \mathbb{E}[(rp_{\cup_{i\in I} B(x_i,(1-2\varepsilon)R)}\tau_r(\cup_{i\in I} B(x_i,(1-\varepsilon)R)))^m] \Big).
\end{split}
\end{align}
Using Lemma \ref{FPT} and the assumption \eqref{porder} to take the limit as $r\to\infty$ yields
\begin{align} \label{last}
    \limsup_{r\to\infty} \mathbb{E}[(r&p_{B(x^*,(1-2\varepsilon)R)}T_r)^m] \leq 0.
\end{align}
Since the left-hand side of \eqref{last} is trivially non-negative, the proof is complete.
\end{proof}

\section{Cover times for a frequently resetting searcher on a network} \label{net}

\subsection{Definitions and main result} \label{def_net}
Now consider a stochastic searcher on a discrete state space $\Gamma$ that randomly resets to its initial position. In particular, let $X = \{X(t)\}_{t\geq 0}$ be a non-resetting continuous-time Markov chain on a finite or countably infinite state space $\Gamma$. We encode the dynamics of $X$ in its infinitesimal generator matrix $Q = \{q(i,j)\}_{i,j\in\Gamma}$ whose off-diagonal entries give the (non-negative) rate that $X$ jumps from $i\in\Gamma$ to $j\in\Gamma$ \cite{norris1998}. By construction the diagonal entries of $Q$ are non-positive and are such that $Q$ has zero row sums. We assume that $\sup_{i\in\Gamma} |q(i,i)|<\infty$ so that $X$ cannot take infinitely many jumps in finite time.

Let $X_r$ be the process $X$ modified to instantaneously reset to its initial position $i_0\in\Gamma$ according to $\sigma$ as in \eqref{sig}. Assume the detection `radius' of $X_r$ is zero; that is, the searcher detects only the node it occupies. Then the region covered by the resetting searcher up through time $t>0$ is
\begin{align*}
    S_r(t) := \cup_{s=0}^t X_r(s) \subseteq \Gamma
\end{align*}
and the corresponding cover time of the finite target region $U_T\subseteq \Gamma$ is
\begin{align*}
    T_r := \inf\{ t>0 : U_T \subseteq S_r(t) \}.
\end{align*}

As before, we define the first hitting time of $X_r(t)$ to a set $\Omega\subseteq \Gamma$,
\begin{align*}
    \tau_r(\Omega) := \inf\{ t\geq 0 : X_r(t) \in\Omega\},
\end{align*}
and the probability of a stochastic searcher hitting the set $\Omega\subseteq \Gamma$ before a resetting event at time $\sigma>0$,
\begin{align} \label{pOm}
    p_{\Omega} := \mathbb{P}(\tau_0(\Omega) < \sigma).
\end{align}

Following \cite{lawley2020networks}, define a path $\mathcal{P}$ of length $n\in\mathbb{N}$ from $i_1\in\Gamma$ to $i_{n+1}\in\Gamma$ to be a sequence of $n+1$ states in $\Gamma$,
\begin{align*}
    \mathcal{P} = (\mathcal{P}(1),\dots,\mathcal{P}(n+1)) = (i_1,\dots,i_{n+1}) \in \Gamma^{n+1},
\end{align*}
so that 
\begin{align} \label{path}
    q(\mathcal{P}(j),\mathcal{P}(j+1)) > 0,\quad\text{for }j\in\{1,\dots,n\}.
\end{align}
The condition in \eqref{path} says that there is a strictly positive probability of $X$ following the path $\mathcal{P}$. Naturally we assume there are paths from $i_0$ to each node in $U_T$.

For a path $\mathcal{P}\in\Gamma^{n+1}$, let $\lambda(\mathcal{P})$ be the product of the rates along the path,
\begin{align} \label{lamb}
    \lambda(\mathcal{P}) := \prod_{i=1}^n q(\mathcal{P}(i),\mathcal{P}(i+1)) > 0.
\end{align}
Let $h (\Gamma_0,\Gamma_1)\in\mathbb{N}$ denote the length of the geodesic path(s) from a set of nodes $\Gamma_0\subset\Gamma$ to a set of nodes $\Gamma_1\subset\Gamma$,
\begin{align*}
    h(\Gamma_0,\Gamma_1) := \min\{ n: \mathcal{P}\in\Gamma^{n+1}, \mathcal{P}(1)\in \Gamma_0, \mathcal{P}(n+1)\in \Gamma_1 \}.
\end{align*}
We define the set of all paths from $\Gamma_0$ to $\Gamma_1$ with this minimum length $h(\Gamma_0,\Gamma_1)$,
\begin{align} \label{S}
    \mathcal{S}(\Gamma_0,\Gamma_1) := \{ \mathcal{P}\in\Gamma^{n+1}: \mathcal{P}(1)\in \Gamma_0, \mathcal{P}(n+1)\in \Gamma_1, n = h(\Gamma_0,\Gamma_1) \}.
\end{align}
Define
\begin{align} \label{Lamb}
    \Lambda(\Gamma_1) := \sum_{\mathcal{P}\in\mathcal{S}(i_0,\Gamma_1)} \mathcal{P}(1) \lambda(\mathcal{P}).
\end{align}
In words, the quantity $\Lambda(\Gamma_1)$ in \eqref{Lamb} is the sum of the products of the jump rates along the shortest paths from $i_0$ to $\Gamma_1$. If there is only one such path, then $\Lambda(\Gamma_1)$ is merely the product of the jump rates along that path as in \eqref{lamb}.

Finally, let $h^*\geq 1$ denote the length of the geodesic path(s) between $i_0$ and the furthest node(s) in the finite target set $U_T$,
\begin{align} \label{dstar}
    h^* := \sup_{j\in U_T} h(i_0,j)
\end{align}
and let $U_T^{\text{far}}$ denote the furthest target point(s) in $U_T$,
\begin{align} \label{UTnet}
    U_T^{\text{far}} := \{ j\in U_T : h(i_0,j) = h^*\} \subseteq U_T.
\end{align}

With these definitions established, we now state the main result of this section:
\begin{theorem} \label{thm_net}
    Under the assumptions of section \ref{net}, for integers $m\geq 1$ we have
    \begin{align*}
    \mathbb{E}[(T_r)^m] \sim \frac{m!\,K_m}{r^{m(1-h^*)}} \quad\text{as }r\to\infty
    \end{align*}
    where
    \begin{align*}
        K_m := \sum_{j=1}^{|U_T^{\text{far}}|} \Big( (-1)^{j-1} \sum_{J\subseteq U_T^{\text{far}},\,|J|=j}(\Lambda(J))^{-m} \Big).
    \end{align*}
\end{theorem}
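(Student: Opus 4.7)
The plan is to combine exact inclusion-exclusion on the finite target set $U_T$ with Lemma \ref{FPT} and a network-specific asymptotic for the hitting probabilities $p_J$. Since $T_r = \max_{j\in U_T} \tau_r(\{j\})$ and $\min_{j\in J} \tau_r(\{j\}) = \tau_r(J)$, inclusion-exclusion on the survival probability gives
\begin{align*}
    \P(T_r > t) = \sum_{\emptyset \neq J \subseteq U_T} (-1)^{|J|+1} \P(\tau_r(J) > t),
\end{align*}
so substituting $t = z^{1/m}$ and integrating in $z \geq 0$ yields the exact identity
\begin{align*}
    \E[(T_r)^m] = \sum_{\emptyset \neq J \subseteq U_T} (-1)^{|J|+1} \E[(\tau_r(J))^m].
\end{align*}
Finiteness of $U_T$ makes this a finite alternating sum, so termwise limits are legitimate.

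Applying Lemma \ref{FPT} to each term gives $\E[(\tau_r(J))^m] \sim m!/(rp_J)^m$ as $r\to\infty$. The proof then rests on the network asymptotic
\begin{align*}
    p_J \sim \Lambda(J)\, r^{-d(i_0,J)} \quad\text{as } r\to\infty,
\end{align*}
in which only geodesic paths from $i_0$ to $J$ contribute at leading order with total weight $\Lambda(J)$ as in \eqref{Lamb}. Granting this, $\E[(\tau_r(J))^m] \sim m!\,\Lambda(J)^{-m} r^{m(d(i_0,J)-1)}$. For subsets $J$ containing any node strictly closer than $d^*$, we have $d(i_0,J) < d^*$, so these terms are $o(r^{m(d^*-1)})$ and drop out after dividing by $r^{m(d^*-1)}$. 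For $J \subseteq U_T^{\text{far}}$, $d(i_0,J) = d^*$ and $r^{-m(d^*-1)} \E[(\tau_r(J))^m] \to m!/\Lambda(J)^m$. Collecting the surviving terms in the finite inclusion-exclusion sum yields
\begin{align*}
    \E[(T_r)^m] \sim m!\, r^{m(d^*-1)} \sum_{j=1}^{|U_T^{\text{far}}|} (-1)^{j-1} \sum_{\substack{J \subseteq U_T^{\text{far}} \\ |J|=j}} \Lambda(J)^{-m} = \frac{m!\,K_m}{r^{m(1-d^*)}}.
\end{align*}

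The main obstacle is establishing the network asymptotic $p_J \sim \Lambda(J)/r^{d(i_0,J)}$, the set-valued analogue of the single-target results used in \cite{FPTuFSR,lawley2020networks}. One writes $p_J$ as a sum over paths $\mathcal{P}$ from $i_0$ to $J$ of the probability that the embedded jump chain traces $\mathcal{P}$ and the total holding time along $\mathcal{P}$ is less than $\sigma = Y/r$. For small $\sigma$, the sum of holding times along a path of length $n$ has density behaving like $\prod_i |q(\mathcal{P}(i),\mathcal{P}(i))|\, s^{n-1}/(n-1)!$ near zero, so each path contributes at order $\lambda(\mathcal{P})/r^n$ once integrated against the survival function of $\sigma$; the finite moment-generating function hypothesis on $Y$ controls the error. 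Paths of length greater than $d(i_0,J)$ are subdominant, so only geodesic paths contribute at leading order, producing $\Lambda(J)/r^{d(i_0,J)}$. With this ingredient the inclusion-exclusion argument above completes the proof.
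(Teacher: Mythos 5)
Your proposal is correct and follows essentially the same route as the paper: the exact inclusion-exclusion identity for the moments of $T_r$ over subsets $J\subseteq U_T$, termwise application of Lemma \ref{FPT} (legitimate since $U_T$ is finite), and the asymptotic $p_J\sim\Lambda(J)/r^{d(i_0,J)}$, with the non-geodesic-distance subsets vanishing after rescaling by $r^{m(1-d^*)}$. The only difference is that the paper does not prove the hitting-probability asymptotic you sketch at the end; it simply invokes it as a known result from \cite{FPTuFSR}.
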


Theorem \ref{thm_net} above depends only on network properties along the shortest paths from
the searcher initial position to the furthest parts of the target. If this furthest region contains only one node, we have the following corollary:

\begin{corollary} \label{cor5}
    Under the assumptions of Theorem \ref{thm_net}, assume further that $|U_T^{\text{far}}| = 1$. Then 
        \begin{align*}
    \mathbb{E}[(T_r)^m] \sim \frac{m!}{(rp^*)^m} \quad\text{as }r\to\infty
    \end{align*}
    where
    \begin{align*}
        p^* = \frac{\Lambda(U_T^{\text{far}})}{r^{h^*}} \sim p_{U_T^{\text{far}}} \quad\text{as }r\to\infty.
    \end{align*}
    Further, $rp^*T_r$ converges in distribution to a unit rate exponential random variable,
    \begin{align*}
        rp^*T_r \to_{\textup{dist}} \textup{Exponential}(1)\quad\text{as }r\to\infty.
    \end{align*}
\end{corollary}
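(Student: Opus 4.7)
The strategy is to specialize Theorem \ref{thm_net} to the case $|U_T^{\text{far}}|=1$ and then upgrade the resulting moment asymptotics to weak convergence via the method of moments.

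First, when $U_T^{\text{far}}$ is a singleton, the alternating double sum defining $K_m$ collapses to its single term $j=1$, $J = U_T^{\text{far}}$, giving $K_m = \Lambda(U_T^{\text{far}})^{-m}$. Setting $p_0 := \Lambda(U_T^{\text{far}})/r^{d^*}$, one computes $(rp_0)^m = \Lambda(U_T^{\text{far}})^m\, r^{m(1-d^*)}$, so that the conclusion of Theorem \ref{thm_net} becomes
\begin{align*}
\mathbb{E}[(T_r)^m] \sim \frac{m!\,\Lambda(U_T^{\text{far}})^{-m}}{r^{m(1-d^*)}} = \frac{m!}{(rp_0)^m} \quad\text{as }r\to\infty.
\end{align*}

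Second, I would justify $p_0 \sim p_{U_T^{\text{far}}}$ by a short-time analysis of the CTMC. Before a reset (which occurs on timescale $1/r$), the probability of reaching the single target at graph distance $d^*$ is dominated by trajectories executing exactly $d^*$ jumps along a geodesic path in $\mathcal{S}(i_0, U_T^{\text{far}})$; each such path $\mathcal{P}$ contributes $\lambda(\mathcal{P})/r^{d^*}$ to leading order, whereas any trajectory of length greater than $d^*$ incurs at least one additional $1/r$ factor and is subdominant. Summing over geodesic paths reproduces $\Lambda(U_T^{\text{far}})/r^{d^*} = p_0$, matching the prescription in the corollary. In practice this asymptotic is already encoded in the proof of Theorem \ref{thm_net}, so one could alternatively extract it directly from that argument or cite the analogous hitting-probability statement in \cite{lawley2020networks}.

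Finally, for the distributional convergence I would apply the Fr\'echet--Shohat moment convergence theorem. The moment asymptotics above yield $\mathbb{E}[(rp_0 T_r)^m] \to m!$ for every integer $m \geq 1$, and since $\{m!\}$ are the moments of an Exponential$(1)$ random variable and this distribution is determined by its moments (its moment generating function $1/(1-t)$ is analytic near the origin, so Carleman's condition holds), convergence of all moments implies $rp_0 T_r \to_{\text{dist}} \text{Exponential}(1)$. I expect the main obstacle to be the middle step: although morally just a clean short-time CTMC estimate, a rigorous proof of $p_0 \sim p_{U_T^{\text{far}}}$ must control non-geodesic excursions uniformly in $r$ and recover the prefactor $\Lambda(U_T^{\text{far}})$ exactly. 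The specialization of Theorem \ref{thm_net} and the method-of-moments upgrade are essentially routine once this asymptotic is in hand.
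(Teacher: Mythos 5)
Your proposal is correct and follows essentially the same route as the paper: the corollary is obtained by setting $|U_T^{\text{far}}|=1$ in Theorem \ref{thm_net}, whose proof already invokes the hitting-probability asymptotic $p_J \sim \Lambda(J)/r^{d(i_0,J)}$ as a cited result from \cite{FPTuFSR}, so the step you flag as the main obstacle is simply taken as given rather than re-derived. The distributional convergence is likewise deduced from the moment convergence via Theorem 30.2 of \cite{billingsley1995}, i.e.\ the same method-of-moments/moment-determinacy argument you propose.
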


Note that using \eqref{ropt} on Theorem \ref{thm_net} to estimate the optimal resetting rate on a network yields the approximation $r_{\textup{opt}} \approx 0$ despite stochastic resetting having the capacity to reduce the MCT. Below we provide an alternative estimate of the optimal resetting rate on a network as well as a criterion for when inducing minimal stochastic resetting into the search process reduces the MCT. In particular, the proof of Theorem \ref{thm_net} reveals that the MCT can be described exactly by MFPTs to target nodes,
\begin{align} \label{mct_exact}
    \mathbb{E}[T_r] = \sum_{j=1}^{|U_T|} \Big( (-1)^{j-1} \sum_{J\subseteq U_T,\,|J|=j} \mathbb{E}[\tau_r(J)] \Big).
\end{align}

Supplementing a recent derivation \cite{PhysRevE.102.022115} with a higher order term, we find that MFPTs under infrequent exponentially-distributed resetting satisfy
\begin{align} \label{crit}
    \mathbb{E}[\tau_r(J)] = \mathbb{E}[\tau_0(J)] + \frac{r}{2} \Phi(J) + \frac{r^2}{2}\Big( \frac{1}{3}\mathbb{E}[\tau_0^3(J)] + \Phi(J)\mathbb{E}[\tau_0(J)] \Big) + o(r^2)
\end{align}
where
\begin{align*}
    \Phi(J) := (\mathbb{E}[\tau_0(J)])^2 - \text{Var}(\tau_0(J))
\end{align*}
and $\textup{Var}(\tau)$ denotes the variance of $\tau$. Hence by substitution of \eqref{crit} into \eqref{mct_exact},
\begin{align} \label{5352}
\begin{split}
    \mathbb{E}[T_r] &=  \mathbb{E}[T_0] + \frac{r}{2} \sum_{j=1}^{|U_T|} \Big( (-1)^{j-1} \sum_{J\subseteq U_T,\,|J|=j} \Phi(J) \Big)\\
    &\quad + \frac{r^2}{2} \sum_{j=1}^{|U_T|} \Big( (-1)^{j-1} \sum_{J\subseteq U_T,\,|J|=j} \big( \frac{1}{3}\mathbb{E}[\tau_0^3(J)] + \Phi(J) \mathbb{E}[\tau_0(J)] \big)\Big) + o(r^2).
\end{split}
\end{align}

Infrequent resetting thus reduces the MCT on a network if
\begin{align} \label{critt}
    \sum_{j=1}^{|U_T|} \Big( (-1)^{j-1} \sum_{J\subseteq U_T,\,|J|=j}\Phi(J)\Big) < 0.
\end{align}
If the target is merely one node (i.e.\ $|U_T|=1$), we recover the corresponding criterion for reducing the MFPT in \cite{PhysRevE.102.022115} which requires that the coefficient of variation of the first passage time without resetting be greater than one,
\begin{align*}
    \textup{CV}(\tau_0(U_T)) := \frac{\sqrt{\textup{Var}(\tau_0(U_T))}}{\mathbb{E}[\tau_0(U_T)]} > 1.
\end{align*}
Assuming the criterion in \eqref{critt} is satisfied, we can use the zero of the $r$-derivative of \eqref{5352} to attain an approximation of the optimal resetting rate to minimize MCTs on a network,
\begin{align*}
    r_{\textup{opt}} \approx \frac{\sum_{j=1}^{|U_T|}\Big( (-1)^j \sum_{J\subseteq U_T,\,|J|=j} \Phi(J)\Big)}{2 \sum_{j=1}^{|U_T|}\Big( (-1)^{j-1} \sum_{J\subseteq U_T,\,|J|=j} \big((1/3)\mathbb{E}[\tau_0^3(J)] + \Phi(J) \mathbb{E}[\tau_0(J)] \big)\Big)}.
\end{align*}

\subsection{Proof} \label{pf_net}
The proof of Theorem \ref{thm_net} relies on the following lemma which was proven in \cite{FPTuFSR} (see Proposition 1 in \cite{FPTuFSR}). Lemma \ref{FPT_net} itself uses that the distribution of the search time without resetting evolves according to
\begin{align} \label{t0O}
    \mathbb{P}(\tau_0(\Omega) \leq t) \sim \frac{\Lambda(\Omega)}{h(i_0,\Omega)!} t^{h(i_0,\Omega)} \quad\text{as }t\to 0^+
\end{align}
as shown in Proposition 1 of \cite{lawley2020networks}.
\begin{lemma} \label{FPT_net}
    Let $\Omega \subset \Gamma$ and assume \eqref{t0O}. Then, for integers $m\geq 1$,
    \begin{align*}
        \E[(\tau_r(\Omega))^m] \sim \frac{m!}{(rp_{\Omega})^m} \quad\text{as }r\to\infty
    \end{align*}
    and $p_{\Omega}$ in \eqref{pOm} satisfies
    \begin{align*}
        p_{\Omega} \sim \frac{\Lambda(\Omega)}{r^{h(i_0,\Omega)}}\quad\text{as }r\to\infty.
    \end{align*}
\end{lemma}

\begin{proof}[Proof of Theorem \ref{thm_net}]
By the inclusion-exclusion principle \cite{durrett2019},
\begin{align} \label{PTr_net}
\begin{split}
    \mathbb{P}(T_r > t) &= \mathbb{P}(\cup_{j\in U_T} \{\tau_r(j)>t \})\\
    &= \sum_{j=1}^{|U_T|} \Big( (-1)^{j-1} \sum_{J\subseteq U_T,\,|J|=j} \mathbb{P}(\cap_{j\in J} \{\tau_r(j)>t \}) \Big)\\
    &= \sum_{j=1}^{|U_T|} \Big( (-1)^{j-1} \sum_{J\subseteq U_T,\,|J|=j} \mathbb{P}(\tau_r(J) >t) \Big).
\end{split}
\end{align}
Substituting $t$ for $z^{1/m}$ with $m\in \mathbb{N}\backslash \{0\}$ in \eqref{PTr_net} and integrating over $z\geq 0$ gives an expression for the moments of $T_r$,
\begin{align} \label{mean}
    \mathbb{E}[(T_r)^m] = \sum_{j=1}^{|U_T|} \Big( (-1)^{j-1} \sum_{J\subseteq U_T,\,|J|=j} \mathbb{E}[(\tau_r(J))^m] \Big).
\end{align}
Lemma \ref{FPT_net} and the assumption that $|U_T|<\infty$ yields
\begin{align*}
    \lim_{r\to\infty} \mathbb{E}[r^{m(1-h^*)}T_r^m] &= \lim_{r\to\infty} \sum_{j=1}^{|U_T|} \Big( (-1)^{j-1} \sum_{J\subseteq U_T,\,|J|=j} \frac{r^{-m h^*}}{p_J^m} \mathbb{E}[(rp_J\tau_r(J))^m] \Big)\\
    &= \sum_{j=1}^{|U_T|} \Big( (-1)^{j-1} \sum_{J\subseteq U_T,\,|J|=j} \lim_{r\to\infty}\frac{r^{-m h^*}}{p_J^m} \lim_{r\to\infty}\mathbb{E}[(rp_J\tau_r(J))^m] \Big)\\
    &= \sum_{j=1}^{|U_T|} \Big( (-1)^{j-1} \sum_{J\subseteq U_T,\,|J|=j} \lim_{r\to\infty}\frac{r^{m(h(i_0,J)-h^*)}}{(\Lambda(J))^m} \\
    &\qquad \times\lim_{r\to\infty} \mathbb{E}[(rp_J\tau_r(J))^m] \Big)\\
    &= m! \sum_{j=1}^{|U_T^{\text{far}}|} \Big( (-1)^{j-1} \sum_{J\subseteq U_T^{\text{far}},\,|J|=j} \Lambda(J)^{-m} \Big).
\end{align*}
The final step uses that $h(i_0,J) < h^*$ unless $J\subseteq U_T^{\text{far}}$ in which case $h(i_0,J) = h^*$. Setting $|U_T^{\text{far}}| = 1$ immediately yields the moment convergence in Corollary \ref{cor5} and convergence in distribution follows from Theorem 30.2 in \cite{billingsley1995}.
\end{proof}

\section{Examples} \label{examples}
\noindent Below we illustrate the results in sections \ref{freq} and \ref{net} with a few examples.

\subsection{Pure diffusion in one dimension} \label{exact}
Consider a 1D Brownian searcher with diffusivity $D>0$ that resets according to an exponential random variable with rate $r>0$ to its initial position, $x_0=x_r=0$. Below we determine the MCT of the interval $[-a,b]$ with $a>0$ and $b>0$ in two distinct cases: In the first case the searcher is unconstrained on the real line and in the second case the searcher is confined to the interval $[-a,b]$ with reflecting boundary conditions. For simplicity we assume the searcher is a point particle; if instead the searcher has detection radius $R>0$ then one can study the point particle cover time of $[R-a,b-R]$.
\\ \\
\noindent \emph{Unconstrained search.} Consider first that the searcher is unconstrained on $\mathbb{R}$. It follows from the strong Markov property that the cover time of the interval $[-a,b]$ can be described by the sum of the exit time from $(-a,b)$ and the exit time from the half line with initial position $X_r(0) = a+b$ and resetting position either $a>0$ or $b>0$ depending on which interval boundary was first contacted. The mean interval exit time is known \cite{aPal2019},
\begin{align} \label{unc1}
    \E(\tau_r(\{-a,b\})) = \frac{1}{r}\left[ \frac{\sinh((a+b) \alpha)}{\sinh(a\alpha) + \sinh(b\alpha)} - 1 \right]
\end{align}
where $\alpha:=\sqrt{r/D}$. The mean half line exit time with initial position $x_0>0$ and resetting position $x_r>0$ is also known \cite{Evans2020},
\begin{align} \label{unc0}
    \E(\tau_r(\{0\}|x_0,x_r)) := \E(\inf\{t>0:X_r(t) \not>0\}) = \frac{1 - e^{-\alpha x_0}}{re^{-\alpha x_r}}.
\end{align}
Furthermore, the hitting probabilities of $x=-a$ and $x=b$ are given by \cite{aPal2019}
\begin{align}
    \xi_a &:= \P(\tau_r(\{-a\})<\tau_r(\{b\})) = \frac{\sinh(b\alpha)}{\sinh(a\alpha) + \sinh(b\alpha)},\label{uncea}\\
    \xi_b &:= 1 - \P(\tau_r(\{-a\})<\tau_r(\{b\})) = \frac{\sinh(a\alpha)}{\sinh(a\alpha) + \sinh(b\alpha)}.\label{unceb}
\end{align}

Combining \cref{unc1,uncea,unceb,unc0} yields the MCT of the interval $[-a,b]$ by a resetting Brownian searcher,
\begin{align} \label{unc_ETr}
\begin{split}
    \E(T_r) &= \E(\tau_r(\{-a,b\})) + \xi_a \E(\tau_r(\{0\}|a+b,b)) + \xi_b \E(\tau_r(\{0\}|a+b,a))\\
    &= \frac{e^{(a+b)\alpha} + e^{2a\alpha} + e^{2b\alpha} - e^{a\alpha} - e^{b\alpha} - 1}{r\left(e^{a\alpha} + e^{b\alpha}\right)}.
\end{split}
\end{align}

Notice in Figure \ref{fig1} that for finite resetting rate $r$ the MCT in \eqref{unc_ETr} is finite. This is in stark contrast to the analogous cover time problem without resetting; it is well-known that the MFPT to a point not equal to $X(0)$ is infinite and, hence, so too is the MCT of an interval of nonzero length. Observe further that the MCT in \eqref{unc_ETr} achieves a unique minimum with a positive resetting rate, though this optimal resetting rate does not have an explicit analytical solution.

Taking $r\to\infty$ in \eqref{unc_ETr} reveals that the MCT depends on the relative sizes of $a>0$ and $b>0$. For instance, if $a<b$, one can verify that the large $r$ behavior of \eqref{unc_ETr} evolves like
\begin{align} \label{etrrp}
\E(T_r) \sim \frac{1}{rp}\quad\text{as }r\to\infty
\end{align}
where $p:=\text{exp}(-b\alpha)$ is the probability of hitting the furthest part of the target before resetting, which is in agreement with Theorem \ref{thm1}. In this case it is straightforward to combine the asymptotic result in \eqref{etrrp} with \eqref{ropt} to approximate $r_{\textup{opt}} \approx 4D/b^2$. Hence, the asymptotic result in \eqref{etrrp} closely approximates the true optimum even for modest resetting rates.

We briefly address the MCT of a resetting Brownian motion on a symmetric interval, i.e.\ the case $|a|=b$. In this scenario, resetting strictly increases the MFPT to escape the interval. Yet repeating the analysis above with $|a|=b$ yields a range of resetting rates, $r\in(0,r^*]$, for which the MCT is reduced. Moreover, one finds the same asymptotic result as in \eqref{etrrp} but with a prefactor of $3/2$,
\begin{align} \label{32}
    \E(T_r) \sim \frac{3}{2} \frac{1}{rp}\quad\text{as }r\to\infty.
\end{align}
This prefactor is explained by the success probability in \eqref{main} having frequent resetting behavior determined not merely by one endpoint of the target interval but by both ends. The same phenomenon can be observed in the case of a constrained search, which we consider next.
\\ \\
\noindent \emph{Constrained search.} Consider the same process as before on the interval $[-a,b]$ but now with reflecting boundary conditions. The cover time of the interval can now be described by the sum of (i) the hitting time of $x\in\{-a,b\}$ with resetting to $x=0$ and (ii) the hitting time of $x=0$ with a reflecting boundary condition at  $x=a+b$ with initial position $X_r(0)=a+b$ and resetting to either $a>0$ or $b>0$ depending on which interval boundary was first hit.
\begin{figure}[h!]
  \centering
\includegraphics[width=\textwidth]{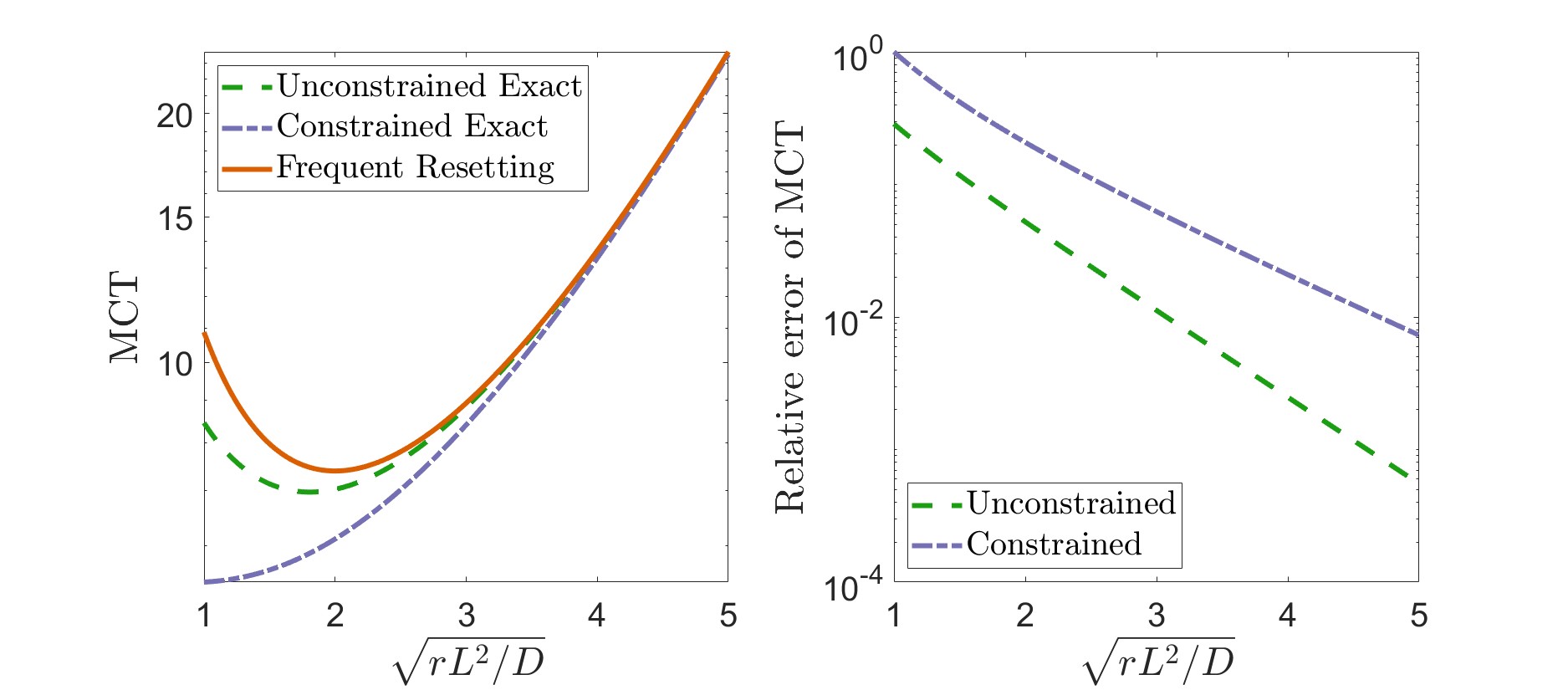}
  \caption{MCTs of the interval $[-1,2]$ for a resetting Brownian motion with diffusivity $D=1$ and $L=2$ denoting the distance of the furthest target point from the searcher initial and resetting position. The left plot illustrates \eqref{etrrp} (solid), \eqref{unc_ETr} (dashed), and \eqref{conc_ETr} (dash-dotted). The right plot illustrates the relative error of the frequent resetting result with respect to the exact unconstrained and constrained expressions. Resetting is exponentially distributed with rate $r>0$. See section \ref{exact} for details.}
  \label{fig1}
\end{figure}
The mean of the first term was computed in the previous section. Below we determine the mean time of the second term.

Suppose the searcher hits $x=b$ before $x=-a$. Denote the survival probability of this process with resetting rate $r>0$ by $Q_r(x,t)$,
\begin{align*}
    Q_r(x,t) := \mathbb{P}(\tau_r(\{-a\}) > t | X_r(0) = x).
\end{align*}
The backward Fokker-Planck equation of the survival probability without resetting is given by
\begin{align} \label{bfp}
    \frac{\partial Q_0(x,t)}{\partial t} = D\frac{\partial^2 Q_0(x,t)}{\partial x^2}
\end{align}
with boundary conditions $Q_0(-a,t) = 0$ and $\frac{\partial Q_0(x,t)}{\partial x}\big|_{x=b} = 0$. Denote the Laplace transform in time of $Q_r(x,t)$ by $\tilde{Q}_r(x,s)$. The Laplace transform of \eqref{bfp} satisfies the ordinary differential equation
\begin{align*}
    s\tilde{Q}_0(x,s) - 1 = D\frac{\partial^2 \tilde{Q}_0(x,s)}{\partial x^2}
\end{align*}
with the same boundary conditions as $Q_0$. We substitute the solution of this boundary value problem into the following known expression of $\tilde{Q}_r(x,s)$ in terms of $\tilde{Q}_0(x,s)$ (which can be derived by way of a renewal equation, see \cite{Evans2020}),
\begin{align} \label{qr}
    \tilde{Q}_r(x_0,s) = \frac{\tilde{Q}_0(x_0,r+s)}{1-r\tilde{Q}_0(x_r,r+s)},
\end{align}
and finally evaluate \eqref{qr} at $s=0$ with $x_0 = b$ and $x_r = 0$ to find that
\begin{align} \label{ET1}
    \E(\tau_r(\{-a\}|b,0)) = \frac{(e^{(a+b)\alpha} - 1)^2}{re^{a\alpha}(e^{2b\alpha} + 1)}.
\end{align}
By the analogous argument wherein the searcher hits $x=-a$ before $x=b$, we find
\begin{align} \label{ET2}
    \E(\tau_r(\{b\}|-a,0)) = \frac{(e^{(a+b)\alpha} - 1)^2}{re^{-b\alpha}(e^{2a\alpha} + 1)}.
\end{align}

Combining \cref{ET1,ET2} with the mean time of the first term and the relevant hitting probabilities computed in the previous system yields the MCT of the closed interval $[-a,b]$ by a resetting Brownian searcher,
\begin{align} \label{conc_ETr}
\begin{split}
    \E(T_r) &= \frac{1}{r}\left[\frac{\sinh((a+b) \alpha)}{\sinh(a\alpha) + \sinh(b\alpha)} - 1 \right] +\dots \\ &\qquad \frac{\xi_a}{r}\left[\frac{(e^{(a+b)\alpha} - 1)^2}{e^{b\alpha}(e^{2a\alpha} + 1)} \right] + \frac{\xi_b}{r}\left[ \frac{(e^{(a+b)\alpha} - 1)^2}{e^{a\alpha}(e^{2b\alpha} + 1)} \right].
\end{split}
\end{align}
As in the unconstrained case, taking $r\to\infty$ reveals that the MCT depends on the relative sizes of $a>0$ and $b>0$. If $a<b$, the large $r$ behavior of \eqref{conc_ETr} behaves like
\begin{align*}
\E(T_r) \sim \frac{1}{rp}\quad\text{as }r\to\infty
\end{align*}
where $p:=\text{exp}(-b\alpha)$ is again the probability of hitting the further boundary point before a resetting event. We illustrate this behavior in Figure \ref{fig1}. Similar to the unconstrained case, the asymptotic result here closely approximates the true optimal resetting rate.

\subsection{Run-and-tumble particle in one dimension} \label{rtp}
Consider a 1D run-and-tumble particle (RTP) with velocity $v>0$ and switching rate $\gamma>0$ who resets according to an exponential random variable with rate $r>0$ to its initial position, $x_0=x_r=0$. Below we determine the unconstrained and constrained MCTs of the interval $[-a,a]$. For simplicity we again assume the searcher is a point particle.
\\ \\
\noindent \emph{Unconstrained search.} By the strong Markov property, the cover time of $[-a,a]$ can be described by the sum of the exit time from $(-a,a)$ and the exit time from the half line with initial position $X_r(0) = 2a$ and resetting position $a>0$. The mean interval exit time is known \cite{RTP_II},
\begin{align} \label{unc1_rtp}
    \E(\tau_r(\{-a,a\})) = \frac{1}{r} \left( \cosh ac_r + \sqrt{\frac{r}{r+2\gamma}}\sinh ac_r - 1 \right)
\end{align}
where $c_r := \sqrt{r(r+2\gamma)/v^2}$. The mean half line exit time is also known \cite{Evans_RTP},
\begin{align} \label{unc0_rtp}
    \E(\tau_r(\{0\}|a,a)) = \frac{1}{r}\left( \frac{2\gamma e^{ac_r}}{r + 2\gamma - vc_r} - 1 \right).
\end{align}
The sum of \eqref{unc1_rtp} and \eqref{unc0_rtp} yields the MCT of the interval $[-a,a]$ by a resetting RTP,
\begin{align} \label{unc_ETr_rtp}
    \E(T_r) = \frac{1}{r}\left( \cosh ac_r + \sqrt{\frac{r}{r+2\gamma}}\sinh ac_r + \frac{2\gamma e^{ac_r}}{r + 2\gamma - vc_r} - 2 \right).
\end{align}
Taking $r\to\infty$ in \eqref{unc_ETr_rtp}, one finds that the large $r$ behavior of \eqref{unc_ETr} evolves according to \eqref{32} where now $p:=\textup{exp}(-a(\gamma+r)/v)/2$ is the probability of hitting $x=a$ or $x=-a$ before resetting.
\begin{figure}[h!]
  \centering
  \includegraphics[width=\textwidth]{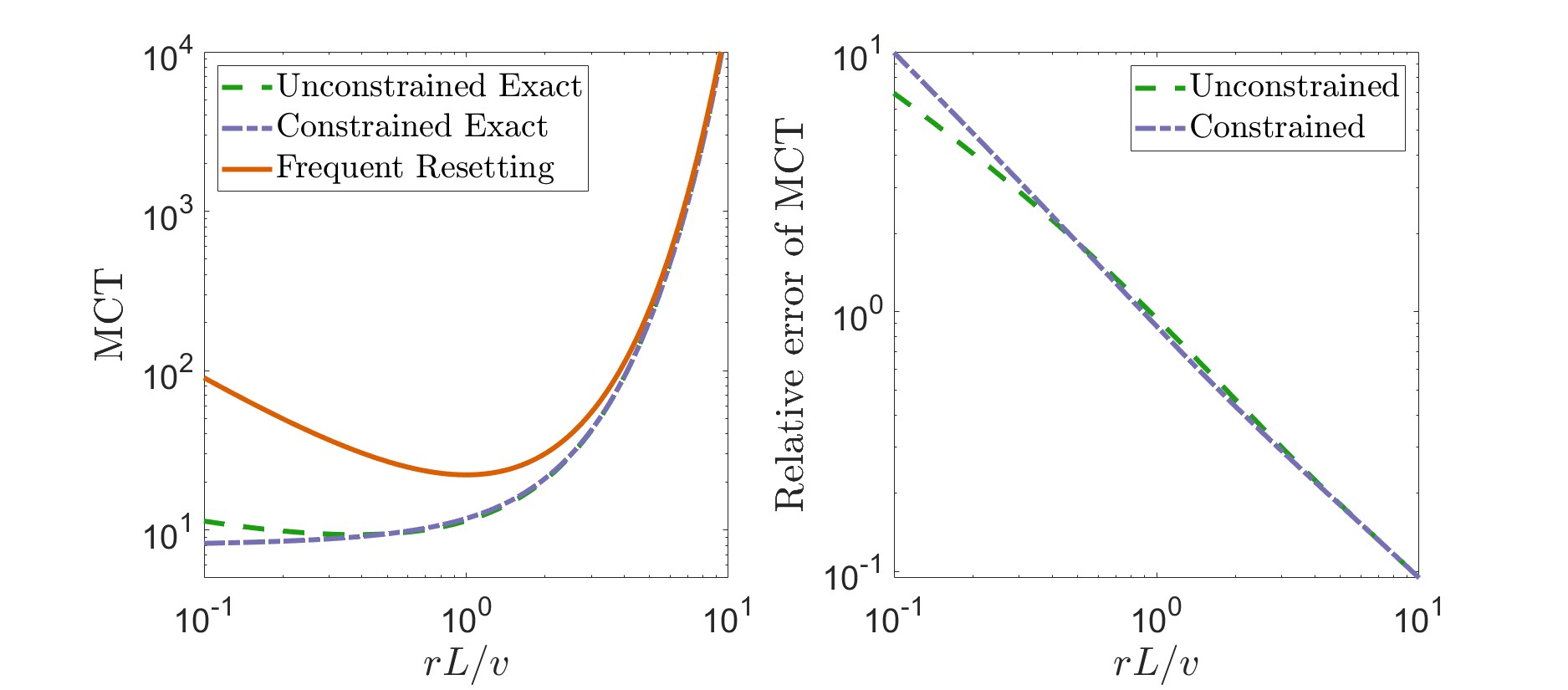}
  \caption{MCTs of the interval $[-1,1]$ for a resetting RTP with velocity $v=1$, switching rate $\gamma=1$, and $L=1$ denoting the distance of the furthest target points from the searcher initial and resetting position. The left plot illustrates \eqref{32} (solid), \eqref{unc_ETr_rtp} (dashed), and \eqref{con_ETr_rtp} (dash-dotted). The right plot illustrates the relative error of the frequent resetting result with respect to the exact unconstrained and constrained expressions. Resetting is exponentially distributed with rate $r>0$. See section \ref{rtp} for details.}
  \label{fig_RTP}
\end{figure}
While there does not exist an analytical expression of the optimal resetting rate from \eqref{unc_ETr_rtp}, combining \eqref{32} with \eqref{ropt} yields the approximation $r_{\textup{opt}} \approx v/a$. As in the symmetric case of subsection \ref{exact}, resetting strictly increases the MFPT to escape the interval, but there exists a range of resetting rates for which the MCT is reduced.
\\ \\
\noindent \emph{Constrained search.} Consider the same process as before but on the interval $[-a,a]$ with reflecting boundary conditions. Now the cover time can be described by the sum of (i) the hitting time of $x\in\{-a,a\}$ with resetting to $x=0$ and (ii) the hitting time of $x=-a$ with a reflecting boundary and initial condition at $x=a$ and resetting to $x=0$. The mean of the first term was computed in the previous section. Below we determine the mean time of the second term.

The Laplace transform of the exit time density of the half-open interval $(-a,a]$ for a non-resetting RTP with $x_0\in(-a,a]$ is known \cite{Angelani_2023},
\begin{align}
    \tilde{\phi}(s,x_0) = \frac{(s+2\gamma)\cosh(c_s(a-x_0))}{(s+2\gamma)\cosh(2ac_s) + vc_s\sinh(2ac_s)}.
\end{align}
Since the exit time density is related to the survival probability by
\begin{align} \label{phi_tx0}
    \phi(t,x_0) = -\frac{\partial}{\partial t} Q_0(t,x_0),
\end{align}
taking the Laplace transform of \eqref{phi_tx0} and rearranging terms yields
\begin{align}
    \tilde{Q}_0(s,x_0) = \frac{1-\tilde{\phi}(s,x_0)}{s}.
\end{align}
Thus, evaluating \eqref{qr} at $s=0$ with $x_0 = a$ and $x_r = 0$, we find
\begin{align} \label{con0_rtp}
    \E(\tau_r(\{-a\}|a,0)) = \frac{2 \left(vc_r \sinh ac_r + r\cosh ac_r \right) \tanh ac_r}{rvc_r}
\end{align}

The sum of \eqref{unc1_rtp} and \eqref{con0_rtp} thus yields the MCT of the interval $[-a,a]$ with reflecting boundary conditions by a resetting RTP,
\begin{align} \label{con_ETr_rtp}
    \mathbb{E}(T_r) = \frac{vc_r\cosh ac_r + \sinh ac_r \left(2vc_r
   \tanh ac_r + 3r\right) - vc_r}{rvc_r}.
\end{align}
As expected, taking $r\to\infty$ in \eqref{con_ETr_rtp} yields \eqref{32} where $p:=\textup{exp}(-a(\gamma+r)/v)/2$. We illustrate this behavior in Figure \ref{fig_RTP}.

\subsection{Pure diffusion in $\mathbb{R}^d$} \label{ball}
Here we consider the $d$-dimensional analogue to the one-dimensional unconstrained search in section \ref{exact}. Consider a Brownian searcher with fixed detection radius $R>0$ and diffusivity $D>0$ in $\mathbb{R}^d$. Suppose the target is a ball with radius $a>0$ centered at the initial (resetting) position of the searcher $U_T = B(X_r(0),a)$. Hence $U_T^{\text{far}}$ is the sphere of radius $a>0$ centered at $X_r(0)$.

The first hitting time of a purely diffusive search process without resetting to $\Omega\subseteq M$ has the short time behavior \cite{lawley2020uni},
\begin{align} \label{limt}
    \lim_{t\to 0^+} t \ln \mathbb{P}(\tau_0(\Omega) \leq t) = - L^2(X_r(0),\Omega)/4D < 0
\end{align}
where $L$ is the standard Euclidean metric.

Assume resetting events are exponentially distributed with rate $r>0$. That is, $Y$ in \eqref{sig} is a unit rate exponential random variable. Then Theorem 2 in \cite{FPTuFSR} combined with \eqref{limt} implies that Theorem \ref{thm1} holds with $p_{B(x^*,R)}$ satisfying
\begin{align} \label{lnp}
    \ln p_{B(x^*,R)} \sim -\sqrt{(a-R)^2r/4D}\quad\text{as }r\to\infty.
\end{align}
Notice that \eqref{lnp} is independent of the spatial dimension and depends only weakly on the detection radius. Moreover, the geometry of the target is relevant only up to the shortest distance to it from the initial position of the searcher. Finally, combining \eqref{lnp} with \eqref{ropt} we approximate the optimal resetting rate to be $r_{\textup{opt}} \approx 16D/(a-R)^2$.

\subsection{Pure diffusion on a $d$-dimensional torus} \label{torus}
Consider a Brownian searcher with fixed detection radius $R>0$ and diffusivity $D>0$ on a $d$-dimensional torus with diameter $\ell=\sup_{y_1,y_2 \in M} \|y_1-y_2\| \in (R,\infty)$ where
\begin{align*}
    M = [0,\ell \sqrt{d})^d \subset \mathbb{R}^d
\end{align*}
with periodic boundary conditions. Suppose the target is the entire torus, $U_T = M$. Let the searcher start from and reset to the torus ``origin'' given by
\begin{align*}
    X_r(0) = (0,0,\dots,0)\subset M.
\end{align*}
Then $U_T^{\text{far}} = \{(\ell\sqrt{d}/2,\dots,\ell\sqrt{d}/2)\}$.

As in subsection \ref{ball}, the first hitting time of the search process without resetting to $\Omega\subseteq M$ in the case of pure diffusion has the short time behavior \eqref{limt} where $L$ is the standard Euclidean metric. Assuming resetting events are exponentially distributed with rate $r>0$, Theorem 2 in \cite{FPTuFSR} combined with \eqref{limt} implies that Theorem \ref{thm1} holds with $p_{B(x^*,R)}$ satisfying
\begin{align} \label{lnpp}
    \ln p_{B(x^*,R)} \sim -\sqrt{(\ell-2R)^2r/4D}\quad\text{as }r\to\infty.
\end{align}
The result in \eqref{lnpp} shares similarly weak dependencies on system parameters as in \eqref{lnp} on the $d$-dimensional sphere. Moreover combining \eqref{lnpp} with \eqref{ropt} we approximate the optimal resetting rate to be $r_{\textup{opt}} \approx 16D/(\ell-2R)^2$.

\subsection{Subdiffusion}
Consider a searcher with fixed detection radius $R>0$ that subdiffuses on $M\subseteq \mathbb{R}^d$ between stochastic resets, which can be modeled by a fractional Fokker-Planck equation \cite{anom_FP},
\begin{align} \label{subdiff}
    \frac{\partial }{\partial t} p_{\alpha} = \prescript{}{0}{D}_t^{1-\alpha} K_{\alpha} \Delta p_{\alpha}, \quad t>0,x\in M.
\end{align}
In \eqref{subdiff}, $t>0$ is the time elapsed since the most recent reset event, $p_{\alpha} = p_{\alpha}(x,t)$ denotes the probability density of the searcher, $K_{\alpha}$ is the generalized diffusivity (with units $(\text{length})^2(\text{time})^{-\alpha}$), and $\prescript{}{0}{D}_t^{1-\alpha}$ is the Riemann-Liouville fractional derivative \cite{samko1993fractional},
\begin{align*} 
    \prescript{}{0}{D}_t^{1-\alpha} f(t) = \frac{1}{\Gamma(\alpha)} \frac{d}{dt} \int_0^t \frac{f(s)}{(t-s)^{1-\alpha}}\, \textup{d}s.
\end{align*}
where $\Gamma(\alpha)$ is the gamma function. Hence its mean-squared displacement (without resetting) evolves in time according to a sublinear power law $t^{\alpha}$ for $\alpha\in(0,1)$. 

One can obtain paths of subdiffusive searchers whose probability densities satisfy \eqref{subdiff} by way of a random time change, or subordination, of diffusive search paths \cite{magdziarz2016}. In particular, denote by $X = \{X(s)\}_{s\geq 0}$ the path of a diffusive searcher,
\begin{align*}
    \textup{d}X(s) = \sqrt{2D}\,\textup{d}W(s),
\end{align*}
where $W=\{W(s)\}_{s\geq 0}$ is a standard Brownian motion. Then a corresponding subdiffusive path is given by
\begin{align} \label{XtoY}
    Y(t) = X(S(t)),\quad t\geq 0
\end{align}
where $\{S(t)\}_{t\geq 0}$ is an inverse $\alpha$-stable subordinator independent of $X$ \cite{magdziarz2016}.

Within this framework, Corollary 3 in \cite{lawley2020sub} indicates that if first passage times of $X$ satisfy \eqref{limt} then those of $Y$ defined by \eqref{XtoY} satisfy
\begin{align} \label{tbetaln}
    \lim_{t\to 0^+} t^{\beta} \ln \mathbb{P}(\tau_0(\Omega)\leq t) = -(2-\alpha) \alpha^{\beta} \bigg( \frac{-L^2(U_0,\Omega)}{4D} \bigg)^{\beta/\alpha} < 0
\end{align}
where $\beta = \alpha/(2-\alpha)$. Assuming resetting events are exponentially distributed with rate $r>0$, Theorem 4 in \cite{FPTuFSR} combined with \eqref{tbetaln} implies that Theorem \ref{thm1} holds with $p_{B(x^*,R)}$ satisfying
\begin{align} \label{lnp_sub}
    \ln p_{B(x^*,R)} \sim -\gamma r^{\beta/(\beta+1)} \quad\text{as }r\to\infty
\end{align}
where
\begin{align*}
    \gamma = \frac{\beta+1}{\beta^{\beta/(\beta+1)}} C^{1/(\beta+1)},\quad C = (2-\alpha) \alpha^{\beta} \bigg( \frac{-L^2(U_0,U_T^{\text{far}})}{4D} \bigg)^{\beta/\alpha}.
\end{align*}
Hence combining \eqref{lnp_sub} with \eqref{ropt} we approximate the optimal resetting rate to be $r_{\textup{opt}} \approx ((\beta+1)/\gamma\beta)^{(\beta+1)/\beta}$.

We briefly note that, like subdiffusion, superdiffusive L\'evy flights can also be described by subordinated Brownian motion \cite{lawley2021super} and thus one can use a similar framework to understand the corresponding cover time behavior.

\begin{figure}[h!]
    \centering
    \includegraphics[clip, trim=.8cm 4cm 13.8cm 4cm, width=.36\textwidth]{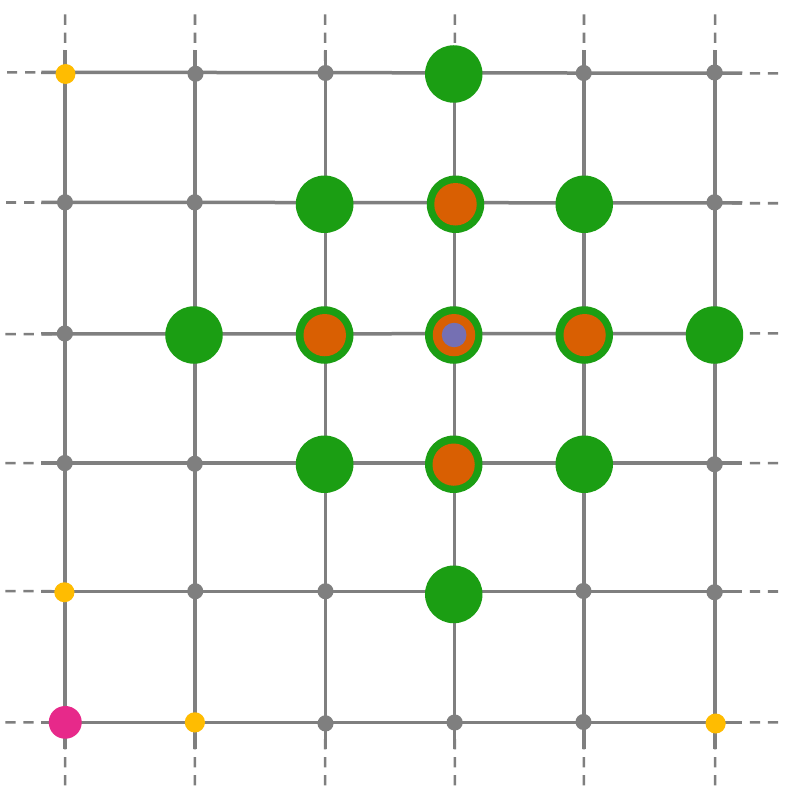} \hspace{.2cm}
    \includegraphics[width=.46\textwidth]{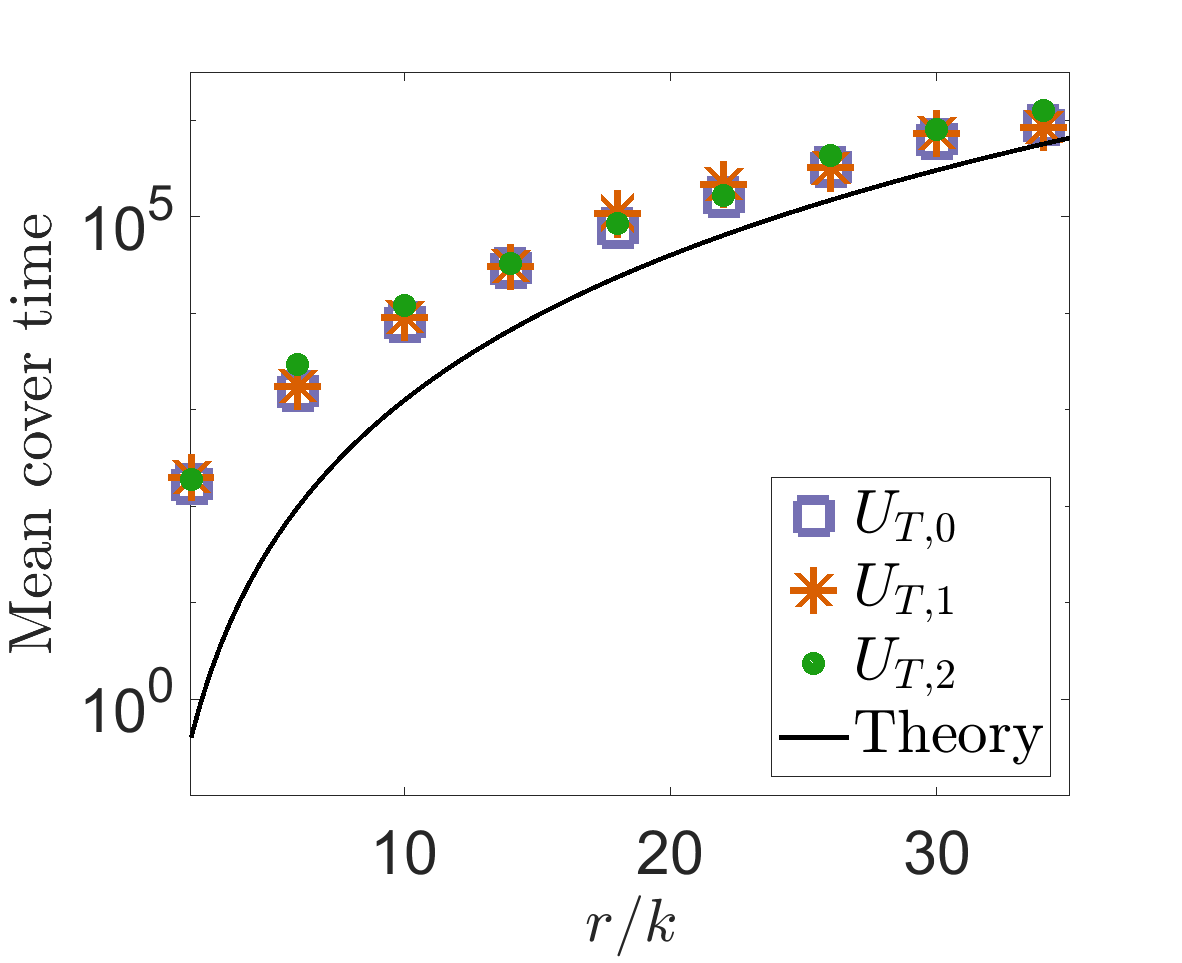}
    \caption{Left: The initial node of the searcher in pink, which can take its first step to any of the four yellow nodes; the simulated nested target sets $U_{T,j}$ in \eqref{UTj} are for $j=0$ (purple), $j=1$ (orange), and $j=2$ (green). Right: The theoretical MCT in Corollary \ref{cor5} to simulations of that for the depicted nested target sets. Each point is the average of 20 simulations with jump rates $k=1$. See section \ref{markov} for details.}
    \label{net_fig}
\end{figure}

\subsection{Markovian search on a $g$-dimensional periodic lattice} \label{markov}
Consider a $g$-dimensional square lattice $I$ with periodic boundary conditions, i.e.\ a square lattice wrapped around the $g$-dimensional torus. Suppose the searcher starts from and resets to $i_0\in I$. Suppose further that the number of nodes along each edge is even and given by $2\ell$ for some $\ell\in\mathbb{N}$. There thus exists a unique furthest node $i_1\in I$ from $i_0\in I$. Let the target set $U_T\subseteq I$ be any set containing $i_1$ so that $U_T^{\textup{far}}$ in \eqref{UTnet} satisfies
\begin{align*} 
    U_T^{\textup{far}} = i_1 \subseteq U_T \subseteq I.
\end{align*}
Hence $h^*$ as in \eqref{dstar} satisfies $h^* = h(i_0,i_1) = g\ell$.

If all jumps in the lattice occur with the same rate $k>0$, then $\Lambda$ is as in \eqref{Lamb},
\begin{align*}
    \Lambda(U_T^{\textup{far}}) = k^{ h^*} |S(i_0,i_1)|,
\end{align*}
with $S$ as in \eqref{S}, the cardinality of which is given by
\begin{align*}
    |S(i_0,i_1)| = 2^g{g\ell\choose \ell,\ell,\dots,\ell} = 2^g\frac{(g\ell)!}{(\ell!)^g}.
\end{align*}
\noindent Hence, $p^*$ in Corollary \ref{cor5} satisfies 
\begin{align} \label{ph}
    p^* = 2^g\frac{(g\ell)!}{(\ell!)^g} \Big(\frac{k}{r}\Big)^{g\ell} \sim \sqrt{2\pi g\ell} \Big(\frac{2}{\pi\ell}\Big)^{g/2} \Big(\frac{gk}{r}\Big)^{g\ell} \quad\text{as }\ell\to\infty
\end{align}
by way of Stirling's approximation. In the case that $g=2$, \eqref{ph} simplifies to
\begin{align*}
    p^* = \frac{4^{1+\ell}}{\sqrt{\pi\ell}} \Big(\frac{k}{r}\Big)^{2\ell} \quad\text{as }\ell\to\infty.
\end{align*}

Now we compare our analysis to numerical simulations on the two dimensional square lattice with periodic boundary conditions. We take $2\ell = 6$, hence $|I|=(2\ell)^2 = 36$. We use the Gillespie algorithm on the position of the searcher $\{X(t)\}_{t\geq 0}$ on the state space $I$ with jumping rate $k=1$ between each pair of connected nodes.

In Figure \ref{net_fig}, we plot the results of stochastic simulations on this network with nested target sets,
\begin{align} \label{UTj}
    U_{T,j} = \{i\in I : h(i,U_T^{\textup{far}}) \leq j\}, \quad j=\{0,1,2\},
\end{align}
where
\begin{align*}
    U_T^{\textup{far}} = U_{T,0},\quad U_{T,j} \subseteq U_{T,j+1}.
\end{align*}
Since these target sets contain $U_T^{\textup{far}}$ in \eqref{UTnet}, Corollary \ref{cor5} implies that their cover times converge in the frequent resetting limit, as evident in Figure \ref{net_fig}. Moreover, the similarity in simulated MCTs across target sets is suggestive of the cover time being governed by the search time to the furthest target node.
\begin{comment}
\subsection{\textcolor{blue}{Superdiffusive L\'evy flight in $\mathbb{R}^d$}}
Finally consider a searcher with fixed detection radius $R>0$ that superdiffuses on $M\subseteq \mathbb{R}^d$ between stochastic resets. We model the superdiffusive L\'evy flight as a subordinated Brownian motion. In particular, if $X = \{X(s)\}_{s\geq 0}$ denotes the path of a $d$-dimensional Brownian searcher, then a corresponding superdiffusive path can be described by
\begin{align*}
    Y(t) = X(S(t)),\quad t\geq 0
\end{align*}
where $\{S(t)\}_{t\geq 0}$ is a non-decreasing L\'evy process with $S(0)=0$ \cite{lawley2021super}. Under this framework, Theorem 3 in \cite{lawley2021super} indicates that FPTs of $Y$ evolve according to 
\begin{align} \label{levy}
    \mathbb{P}(\tau_0(\Omega)\leq t) \sim \rho(\Omega) t \quad \text{as } t\to 0^+
\end{align}
where $\rho(\Omega)>0$ depends on the subordination. Since \eqref{levy} is equivalent to \eqref{t0O} if we `set' $h(i_0,\Omega) = 1$ and $\rho(\Omega) = \Lambda(\Omega)$, then Theorem \ref{thm_net} is satisfied with $h^*=1$ and $K_m$ given by
\begin{align*}
    K_m := \sum_{j=1}^{|U_T^{\text{far}}|} \Big( (-1)^{j-1} \sum_{J\subseteq U_T^{\text{far}},\,|J|=j}(\rho(J))^{-m} \Big).
\end{align*}
Ahh...but really because we are working in continuous space it is Theorem 1 that is relevant (not Theorem 3).
\end{comment}
\section{Discussion}
In this work we estimated all the moments of cover times of a wide range of stochastic search processes in $d$-dimensional continuous space and on an arbitrary discrete network in the frequent resetting limit. These results depend only on the resetting rate, which is well-defined for a large class of resetting time distributions, and the so-called success probability corresponding to the furthest target point(s). These probabilities need not be known exactly; their asymptotic behavior in the frequent resetting limit suffices. Recent work on success probabilities computes these values for a variety of stochastic search and resetting processes \cite{linn2024hitting}.

While these results indeed depend on only a few system parameters, the dependence is far from trivial. Moreover, because knowledge of only a few parameters is necessary, the cover time moments with frequent resetting are indifferent to changes in the domain away from geodesic paths as well as additional targets that are strictly closer than the furthest target regions. This feature is reminiscent of work by Chupeau et al that shows cover times of a non-resetting searcher are dominated by FPTs to hard-to-reach targets \cite{CT_chupeau,Barkai2015}. In particular, they establish a universal distribution for the cover time rescaled by an average FPT whose greatest contributions are the FPTs to distant targets. An important distinction between these works, however, is that the frequently resetting searcher will take near-geodesic paths from its initial position to far target regions, whereas the non-resetting searcher in \cite{CT_chupeau} forgets its initial condition entirely. Both results nonetheless are related by the necessity of rare events to cover the target region.

We finally note that computing the MCT exactly in complicated domains can be involved or analytically intractable, and simulation is more cumbersome than the analogous first passage time problem. Through the main results herein, we establish a means of approximating the optimal resetting rate to minimize the MCT. We also establish a criterion for when endowing a discrete space search process with resetting reduces the MCT. Importantly, these results utilize and build upon a growing body of work concerning the role of resetting in reducing mean search times in a variety of systems \cite{evans_majumdar_2011, PhysRevE.102.022115, GhoshPK_2023_JCP}. That said, determining the role of dimension, domain geometry, and target geometry on cover times would be an interesting and challenging potential avenue for future work.
\newpage
\section{Acknowledgments}
\noindent SL was supported by the National Science Foundation (Grant No.\,2139322). SDL was supported by the National Science Foundation (CAREER DMS-1944574 and DMS-2325258). The authors thank Snehesh Das (UCLA) and Shreyas Waghe (U Mass Amherst) for useful discussions pertaining to section \ref{rtp}. 

\section{Data Availability} \noindent Data sharing is not applicable to this article as no new data were created or analyzed in this study.

\bibliography{lib.bib}
\bibliographystyle{hunsrt}
\end{document}